\documentclass[11pt,reqno]{amsart}

\usepackage{graphicx} 
\usepackage{xcolor}
\usepackage{amssymb}
\usepackage{amsmath}
\usepackage{amsthm}
\usepackage{hyperref} 
\usepackage{parskip}
\usepackage{soul}
\usepackage{multicol}
\usepackage{cancel}

\usepackage{pgfplots}
\pgfplotsset{compat=1.15}
\usepackage{mathrsfs}
\usetikzlibrary{arrows}
\usepackage{tikz-cd}

\newtheorem{theorem}{Theorem}[section]
\newtheorem*{theoremA}{Theorem A}
\newtheorem{proposition}[theorem]{Proposition}
\newtheorem{corollary}{Corollary}[theorem] 

\newtheorem{definition}[theorem]{Definition} 
\newtheorem{rmk}[theorem]{Remark}

\title{Uniqueness of capillary disks in three-dimensional domains}
\author{Henrique Nogueira Bastos}
\thanks{This study was financed in part by the Coordenação de Aperfeiçoamento de Pessoal de Nível Superior - Brasil (CAPES) - Finance Code 001.}
\address{IMPA, Rio de Janeiro RJ 22460-320 Brazil \newline \textit{Email address}: henrique.bastos@impa.br}

\usepackage[backend=biber,
style=numeric,
sorting=nyt]{biblatex}
\begin{document}

\begin{abstract}
We prove uniqueness results for capillary disks in three-dimensional domains that are modeled by an elliptic PDE, under the assumption that the domain admits a family of surfaces with suitable properties. Our main theorem generalizes Nitsche's result for capillary constant mean curvature disks in the Euclidean ball and is inspired by the extension of Hopf's uniqueness theorem for constant mean curvature spheres in Euclidean space due to Gálvez and Mira.
\end{abstract}

\maketitle

\section{Introduction}

 An immersed surface in a domain of a Riemannian three-manifold satisfies the capillary boundary condition if its boundary lies on the boundary of the domain and the surface meets the boundary of the domain at a constant angle along its own boundary. In the particular case where the constant angle is $\pi/2$, the capillary condition is also called the free boundary condition.
 
 In 1985, Nitsche \cite{Nitsche} proved that \textit{a constant mean curvature (CMC) disk satisfying the capillary boundary condition in a three-dimensional Euclidean ball is necessarily a totally umbilical disk}. In particular, it is either a planar disk or a spherical cap.

In \cite{Souam} and \cite{Ros-Souam-1997}, Ros and Souam established a version of Nitsche's Theorem for disks satisfying the capillary boundary condition with respect to geodesic balls in three-dimensional space forms. Later, Fraser and Schoen \cite{Fraser-Schoen-Uniqueness-Disks} proved that free boundary two-dimensional disks with parallel mean curvature, immersed in $n$-dimensional geodesic balls in space forms, are totally umbilical. (For a different higher dimensional generalization of Nitsche's Theorem, see the work of M. Li, G. Wang and L. Weng \cite{LiWanWen}, and also the work of Gaia \cite{Gai}).

The idea behind the proof of these results involves the use of Hopf differentials of CMC surfaces in space forms. They are holomorphic forms that vanish if and only if the surface is totally umbilical. The capillary or free boundary conditions are shown to be the appropriate conditions that guarantee the vanishing of the Hopf differential of every CMC disk that satisfies them. Originally, the holomorphic differential technique was used by H. Hopf \cite{Hopf1951} in the proof of his famous theorem, which states that \textit{any immersed CMC sphere in $\mathbb{R}^3$ is a round sphere}. 

Hopf's proof strategy has been generalized to three-manifolds other than the three-dimensional space forms by Abresch and Rosenberg \cite{AbrRos1}, \cite{AbrRos2}, in the case of homogeneous three-manifolds with a four-dimensional isometry group. The case of general homogeneous manifolds, for which the holomorphic differential argument seems to be out of reach, was dealt with in a series of works by several authors, culminating in the classification theorem of Meeks, Mira, Perez, and Ros \cite{MeeMirPerRos}. (See the survey article \cite{MirPer} for the full story of these developments).

However, even the new methods employed in \cite{MeeMirPerRos} could not treat situations in which the three-manifold is not homogeneous. Recently, Gálvez and Mira \cite{Galvez-Mira-Uniqueness} obtained a uniqueness theorem for immersed spheres, which are more generally modeled by an elliptic PDE, on manifolds that satisfy certain extra properties. Briefly speaking, the hypotheses of their result require the existence of a family of oriented surfaces, called a \textit{transitive family}, that foliates the Grassmannian of oriented 2-planes in the tangent space of the three-manifold, and whose members are modeled by the same elliptic PDE. The conclusion of their theorem is that any immersed sphere modeled by that PDE belongs to the transitive family. 

Their result is a wide-ranging generalization of Hopf's Theorem. In fact, while at first it was not clear if this class of three-manifolds included non-homogeneous spaces, the work of Ambrozio, Marques and Neves \cite{AmbMarNev} showed that there are plenty of non-homogeneous metrics on the three-sphere containing transitive families of embedded minimal two-spheres. In particular, Gálvez-Mira uniqueness theorem provides the full classification of immersed minimal two-spheres in these spaces.


In this paper, we follow the approach of Gálvez-Mira to uniqueness results for two-spheres in three-manifolds and establish new uniqueness results for capillary disks in three-dimensional domains, which are generalizations of Nitsche's results.

\subsection{Main result} The set-up is as follows. Consider a complete orientable Riemannian three-manifold $(M,g)$, and denote by $G^2(M)$ the Grassmannian of oriented $2$-planes in $M$. Let $\Sigma$ be a connected orientable surface (without boundary). Given an immersion $\iota : \Sigma \rightarrow M$, its associated Legendrian lift is the map $\mathcal{L}_\Sigma:\Sigma\rightarrow G^2(M)$ defined by $\mathcal{L}(p)=(\iota(p),T_{\iota(p)}\Sigma)$ for all $p\in \Sigma$. (If the context is not ambiguous, we may omit $\iota$ and regard $\Sigma$ as a subset of $M$).

Let $\Omega$ be a domain with smooth boundary in $M$. An immersed surface $\Sigma$ in $M$ satisfies the capillary boundary condition with respect to $\partial \Omega$ if its intersection with $\partial\Omega$ is transversal and meets $\partial\Omega$ at a constant angle $\alpha\in (0,\pi)$. 

Inspired by the properties of the family of CMC capillary disks in the Euclidean ball (which are subsets of ambient spheres or planes, depending on the value of the mean curvature, and intersect the boundary of the ball transversely), we propose a notion of transitive family that is well-adapted to the capillary boundary condition with respect to a given domain. (Compare with \cite{Galvez-Mira-Uniqueness}, Definition 2.1):

\begin{definition}[Transitive Capillary Family]\label{def.Transitive Capillary Family} Let $\mathcal{S}$ be a family of immersed oriented surfaces (without boundary) in $M$. We say that $\mathcal{S}$ is a transitive capillary family in $M$ with respect to the smooth domain $\Omega\subset M$ if it satisfies:

    \begin{enumerate}
        \item For every $S\in \mathcal{S}$, the Legendrian lift $\mathcal{L}_S$ is an embedding of $S$ into $G^2(M)$.
        \item\label{(ii)} For every $(p,\Pi_p)\in G^2(M)$ with $p\in \Omega$, there exists a unique surface $S=S(p,\Pi_p)\in \mathcal{S}$ such that $(p,\Pi_p)=\mathcal{L}_S(p)$.
        \item\label{(iii)} If $S\in\mathcal{S}$ intersects $\partial\Omega$ transversely at some point $p$, then $S$ meets $\partial\Omega$ at a constant contact angle along the connected component of $S\cap\partial\Omega$ that contains $p$.
        \item The family $\mathcal{S}=\{S(p,\Pi_p)\,|\,p\in\Omega\text{ and }(p,\Pi_p)\in G^2(M)\}$ is of class $C^3$ with respect to $(p,\Pi_p)$.
    \end{enumerate}    
\end{definition}

    In particular, by virtue of \eqref{(ii)} and \eqref{(iii)}, if $\mathcal{S}$ is a transitive capillary family with respect to $\Omega$, then for every $p\in \partial \Omega$ and every angle $\alpha\in (0,\pi)$, there exists a member $S$ of the family satisfying the capillary boundary condition with respect to $\partial \Omega$, meeting it at $p$ with the prescribed angle $\alpha$. Note also that $\mathcal{S}$ may contain surfaces that lie entirely inside $\Omega\setminus \partial \Omega$, or even that meet $\partial \Omega$ tangentially. 

   \indent Let $\Phi=\Phi(x,y,z,p,q,r,s,t)\in C^{1,\alpha}(\mathcal{U})$, where $\mathcal{U}\subset \mathbb{R}^{8}$ is an open convex set, assumed to intersect the vector subspace $\{x=y=z=0\}$. We say that the second-order PDE in two variables
\begin{equation}\label{int.Phi=0}
        \Phi(x,y,f,f_x,f_y,f_{xx},f_{xy},f_{yy})=0
\end{equation}
is elliptic if $\Phi_r>0$ and $4\Phi_r\Phi_t-(\Phi_s)^2>0$ hold on $\mathcal{U}$.

A function $h\in C^2(D)$, defined on a planar domain $D\subset\mathbb{R}^2$ containing the origin, is a solution to the equation $\Phi=0$ if $\{(u,v,h,h_u,h_v,h_{uu},h_{uv},h_{vv})\in \mathbb{R}^8\,|\,\ (u,v)\in D\}\subset\mathcal{U}$ and $h=h(u,v)$ satisfies \eqref{int.Phi=0}.

Let $(u,v,z)$ be local coordinates on $M$. We call them \textit{adapted coordinates} to $(p,\Pi)\in G^2(M)$ if, in these coordinates, $p=(0,0,0)$ and $\Pi=\text{span}\{\partial_u(0),\partial_v(0)\}$ with its usual orientation. 

An elliptic PDE field on $M$ consists of a covering of $M$ by coordinate charts adapted to each $(q,\Pi)\in G^2(M)$, on which an elliptic PDE $\Phi_{(q,\Pi)}=0$ as in \eqref{int.Phi=0} is defined.

Given a smooth compact domain $\Omega$ on $M$, we say that the elliptic PDE field on $M$ is adapted to $\Omega$ if, in addition, the following holds: for $(p,\Pi) \in G^2(M)$ with $p \in \Omega \setminus \partial \Omega$, the adapted coordinates parametrize a subset of $\Omega \setminus \partial \Omega$; while for $p \in \partial \Omega$ and $\Pi$ transverse to $T_p \partial \Omega$, the adapted coordinates are chosen so that points in $\Omega$ correspond to $\{u \geq 0\}$ and points in $\partial \Omega$ correspond to points in $\{u = 0\}$. These further restrictions will make it easier to express the boundary condition for the PDE $\Phi_{(p,\Pi)} = 0$, which is the manifestation of the capillary boundary condition.

\begin{definition}[Modeled Surface] An immersed orientable surface $\Sigma$ is said to be modeled by an elliptic PDE on $M$, adapted to $\Omega$, if for every $q \in \Sigma$ and adapted coordinates $(u,v,z)$ to $(q,\Pi)$ in which $\Sigma$ can be locally written as a vertical graph $z = h(u,v)$ of a $C^2$ function, the function $h$ is a solution to the elliptic PDE $\Phi_{(q,\Pi)} = 0$.
\end{definition}


Our main result concerns immersed surfaces $\Sigma$ that are modeled by an elliptic PDE on $(M,g)$, adapted to a smooth domain $\Omega$, and which intersect $\Omega$ in an immersed disk satisfying the capillary boundary condition with respect to $\Omega$.

\begin{theoremA}\label{int.theo.1}
Let $\Omega$ be a smooth domain in a complete orientable Riemannian three-manifold $(M,g)$. Let $\mathcal{A}$ be a class of immersed oriented surfaces in $M$ such that:
    \begin{itemize}
        \item $\mathcal{A}$ is modeled by an elliptic PDE (adapted to $\Omega$) on $M$;
        \item there exists a transitive capillary family $\mathcal{S}\subset\mathcal{A}$ in $M$ with respect to $\Omega$.
    \end{itemize}
 If an immersed connected oriented surface $\Sigma\in \mathcal{A}$ intersects $\Omega$ in an immersed disk satisfying the capillary boundary condition with respect to $\Omega$, then $\Sigma$ is contained in a surface $S\in \mathcal{S}$ that intersects $\partial \Omega$.
\end{theoremA}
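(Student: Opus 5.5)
We follow the Gálvez–Mira strategy, replacing the closed sphere by the capillary disk and adding a boundary analysis. Let $D=\Sigma\cap\Omega$, an immersed disk whose boundary lies on $\partial\Omega$ and meets it at the constant angle $\alpha$.

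\textbf{Step 1: comparison with the family.} For each $q\in D$, transitivity (Definition \ref{def.Transitive Capillary Family}, item \eqref{(ii)}) gives a unique $S_q:=S(q,T_q\Sigma)\in\mathcal{S}$ tangent to $\Sigma$ at $q$ with the same orientation. In adapted coordinates at $(q,T_q\Sigma)$, write $\Sigma$ as $z=h(u,v)$ and $S_q$ as $z=h_q(u,v)$. Both solve $\Phi_{(q,T_q\Sigma)}=0$, so the difference $w=h-h_q$ satisfies a linear elliptic equation
\[
a_{11}w_{uu}+2a_{12}w_{uv}+a_{22}w_{vv}+b_1w_u+b_2w_v+cw=0 .
\]
This follows from the mean value theorem applied to $\Phi$; it uses the convexity of $\mathcal{U}$ and $\Phi\in C^{1,\alpha}$. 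At the origin, $w$ vanishes together with its first derivatives.

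\textbf{Step 2: local index structure.} By Bers' theorem / the Hartman–Wintner theorem, either $w\equiv0$ near $0$ or $w$ is asymptotic to a nonzero homogeneous harmonic polynomial of degree $n\ge2$ in suitable linear coordinates. Hence the second-order contact $\nabla^2 w(0)$ defines, away from points of higher contact, a quadratic form of signature $(1,-1)$, and so two transverse line fields on $D$.

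More precisely, we use the $C^3$ dependence of $\mathcal{S}$ (item (4) of the definition) to define a continuous symmetric traceless-type $2$-tensor on $D$: the difference of the second fundamental forms of $\Sigma$ and $S_q$ at $q$, normalized by the ellipticity coefficients, so that it is conformal to a quadratic differential in a conformal structure $\sigma$ on $D$ determined by $a_{ij}$. Its zeros are isolated with negative index $-n/2$ unless the tensor vanishes on an open set. This is exactly Gálvez–Mira's "Hopf differential substitute" (their Theorem 4.1). By unique continuation for the linear elliptic equation, if it vanishes on an open set, then $\Sigma$ coincides with $S_q$ near $q$. Analytic continuation through the Legendrian-lift embedding property (item (1)) then shows that all of connected $\Sigma$ lies in $S_q$. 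Since $\partial D\neq\emptyset$, this $S_q$ meets $\partial\Omega$, which is the conclusion.

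\textbf{Step 3: boundary behaviour (the main obstacle).} Assume the tensor does not vanish identically. At $p\in\partial D$, consider $S_p$, tangent to $\Sigma$ at $p$. Since $\Sigma$ is transverse to $\partial\Omega$ there, so is $S_p$, and by item \eqref{(iii)} $S_p$ meets $\partial\Omega$ at a constant angle. Because it shares the tangent plane with $\Sigma$ at $p$, that angle is $\alpha$.

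In the adapted coordinates with $\Omega=\{u\ge0\}$, both $h$ and $h_p$ satisfy the same nonlinear oblique boundary condition on $\{u=0\}$ expressing angle $\alpha$. Hence $w$ satisfies a linear oblique-derivative condition $w_u+\beta w_v+\gamma w=0$ on $u=0$. Differentiating the boundary condition along $\partial D$ in the $v$-direction should show that one of the line fields is tangent to $\partial D$ (or normal to it, depending on the sign conventions). At boundary zeros we reflect: after a change of coordinates straightening the oblique condition, the local model is a harmonic polynomial of degree $n$ on a half-plane with Neumann-type data. Such a zero has half-index $-n/4$, still negative.

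The hard part is making this precise: verifying the boundary condition for $w$ and obtaining the index at boundary zeros. We would try a Schwarz-type reflection of $w$ across $u=0$, producing a solution of an elliptic equation on a full neighborhood with a symmetric zero, and then apply Step 2.

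\textbf{Step 4: Poincaré–Hopf.} Double $D$ along $\partial D$, or use Poincaré–Hopf for manifolds with boundary where the field is tangent to the boundary. The sum of indices of the line field must then equal $\chi(D)=1>0$. Every interior zero contributes negatively and every boundary zero contributes nonpositively, which is a contradiction. Therefore the tensor vanishes identically, and Step 2 gives $\Sigma\subset S\in\mathcal{S}$ with $S\cap\partial\Omega\neq\emptyset$.
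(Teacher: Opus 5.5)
Your architecture is the paper's: the comparison tensor $\sigma$, Bers' local model at its zeros, item (iii) forcing $S(p,T_p\Sigma)$ to meet $\partial\Omega$ at the same angle $\alpha$, doubling $D$ to a sphere, and Poincar\'e--Hopf. The gap is in Steps 3--4, which is exactly where the paper's new content lies. There, the conclusion you anticipate is not right for the line fields you set up.

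Your own suggestion, carried out, gives the correct statement. Use the coordinates the paper uses for the boundary condition: $\partial_u$ normal to $\partial\Omega=\{u=0\}$, $(0,v,z)$ isothermal on $\partial\Omega$, and $h_v(p)=0$. In these coordinates the coefficient $\beta$ is a multiple of an average of $h_v$ and $\widetilde h_v$, so $\beta(p)=0$. Since $w=w_u=w_v=0$ at $p$, differentiating $w_u+\beta w_v+\gamma w=0$ in $v$ gives $w_{uv}(p)=0$. The paper obtains the same identity in Theorem~\ref{thm: mainA} by differentiating $\cos\alpha=\sqrt{G^{uu}/B^{uu}}$ along the boundary curve and subtracting the analogous identity for $S(p,T_p\Sigma)$.

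This identity says $\sigma$ is diagonal in the tangent/conormal frame. So it is the \emph{principal} directions of $\sigma$ that are tangent and normal to $\partial D$. The two null line fields you built in Step 2 from the signature-$(1,-1)$ form are therefore mirror images of each other across $\partial D$. Neither is tangent or normal to it: at a nonsingular boundary point $\sigma$ is diagonal and nondegenerate in that frame, so it is nonzero on both directions. Reflecting one null field onto the second copy of $D$ thus produces a field that jumps along $\partial D$. The missing idea is the swap in Proposition~\ref{prop:hipoteseshopf}: put $L_1$ on one copy and $L_2$ on the other. Being mirror images, they glue to a regular line field on the sphere.

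You could instead double a principal line field, which is reflection-invariant. But then the other option in your Step 4 (Poincar\'e--Hopf for a field tangent to $\partial D$) is unavailable, because a principal field switches between tangent and normal at boundary zeros of odd order. For example, $w=3u^2v-v^3$ is harmonic with $w_u=0$ on $\{u=0\}$. Along $u=0$ its Hessian is $\mathrm{diag}(6v,-6v)$, so its positive eigendirection is $\partial_v$ for $v<0$ and $\partial_u$ for $v>0$. Doubling is needed either way.

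Two smaller points:
\begin{enumerate}
\item No Schwarz reflection is required at boundary zeros. $\Sigma$ is a surface without boundary and $D=\Sigma\cap\Omega$, so points of $\partial D$ are interior points of $\Sigma$ and Bers' theorem applies there directly. The index on the double is then read off from the same model.
\item For a degree-$n$ harmonic model the index is $-(n-2)/2$, not $-n/2$, and genuine zeros only occur for $n\ge 3$. The half-index is $-(n-2)/4$. This does not affect the sign argument.
\end{enumerate}
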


In order to see how this theorem can be used to prove uniqueness results for disks satisfying the capillary boundary condition, let us show how Nitsche's Theorem can be recovered in the case of minimal surfaces. The set of totally geodesic planes in $\mathbb{R}^3$ that intersect the unit ball $\mathbb{B}^3$ centered at the origin transversely forms a transitive capillary family with respect to $\mathbb{B}^3$. Each element is modeled by an elliptic PDE corresponding to the minimal surface equation for graphs. Moreover, each element determines an embedded disk in $\mathbb{B}^3$. 

Let $\Sigma$ be an immersed minimal disk in $\mathbb{B}^3$ satisfying the capillary boundary condition. It is well known that $\Sigma$ admits a unique open minimal extension $\tilde{\Sigma}$ beyond $\mathbb{B}^3$ (since its boundary is analytic – see \cite{Lew} for the free boundary case and \cite{KinNirSpr}, Theorem 5.2, for the general case; one may extend it, for instance, using the solution to Bj\"orling's problem). Applying Theorem \ref{int.theo.1} to $\Sigma$, we conclude that $\Sigma=\tilde{\Sigma}\cap \mathbb{B}^3$ is contained in the intersection of a totally geodesic plane in $\mathbb{R}^3$ with $\mathbb{B}^3$. In particular, $\Sigma$ is an embedded totally geodesic disk.

\subsection{Strategy of the proof} We now describe the main ideas of the proof, which closely follow those developed by Gálvez and Mira in \cite{Galvez-Mira-Uniqueness}. For every $q\in\Sigma$, we define a bilinear form on $T_q\Sigma$ given by
\begin{equation}\label{int.2}
    \sigma_q:=II^{\Sigma}_{q}-II^{S(q,T_q\Sigma)}_q,
\end{equation}
where $II^\Sigma$ is the second fundamental form of $\Sigma$ in $M$, and $S(q,T_q\Sigma)$ denotes the unique element of the transitive capillary family passing through $q$ with $T_q\Sigma$ as its oriented tangent plane at $q$. By the definition of a transitive capillary family, this bilinear form is well-defined on $\Sigma$, of class $C^1$, and measures how far $\Sigma$ is from being contained in a member of the transitive family.

The assumption that $\Sigma$ and the transitive family are modeled by the same elliptic PDE is important for characterizing the zeros of $\sigma$, which are isolated unless the two surfaces locally coincide near $q$. Moreover, the local behavior near these interior zeros is described by the theory developed by L. Bers (See Theorem 6.12 in \cite{ColMin}, or more generally \cite{Bers1956}). Note that our set-up ensures that these points are always interior zeros of $\sigma$ in $\Sigma$, even when $q\in \partial \Omega\cap \Sigma$.

The idea of Gálvez and Mira, who were dealing with closed surfaces, was then to show that, if $\Sigma$  is not contained in the transitive family, then the fields of asymptotic directions of $\sigma$ have only isolated singularities of negative index. By the Poincaré-Hopf Theorem, this is incompatible with the topology of the sphere.

To extend this strategy to the case of a surface $\Sigma$ satisfying the capillary boundary condition with respect to a smooth domain $\Omega$, the main difficulty is to analyze the behavior of $\sigma$ near points of $\partial (\Omega \cap \Sigma)$. As we will prove, if $\Sigma$ is not contained in a member of the transitive capillary family, the capillary boundary condition implies that the fields of asymptotic lines of $\sigma$ are transversal to the boundary of $\Sigma \cap \Omega$ (see Theorem \ref{thm: mainA}). This provides sufficient control to extend these line fields to a sphere (the ``double'' of the disk $\Sigma \cap \Omega$), so that all singularities have negative index (see Theorem \ref{thm:negativeindex}). This leads to a contradiction, showing that $\Sigma$ must be contained in a member of the transitive capillary family with respect to $\Omega$.

We believe that the formulation of our main result could be made more natural and more general if some analytical facts were established regarding the regularity (and maybe the existence of extensions) of solutions to the general elliptic PDE \eqref{int.Phi=0} with capillary boundary conditions, and the local behavior near boundary points where two such solutions coincide. We will not attempt to pursue these directions here, focusing instead on the geometric aspects of the argument, which are neater in our setting and already lead to new applications, as discussed in the final section of the paper.

\textbf{Acknowledgements}: The author is supported by CAPES PhD Scholarship 88887.602445/2021-00. This work is part of the author’s PhD thesis at IMPA, carried out under the supervision of Professor Lucas Ambrozio. The author is deeply grateful to Professor Ambrozio for his guidance, encouragement, and numerous suggestions throughout this research. Thanks are also due to Carlos Toro and Diego Guajardo for fruitful discussions, to Zihui Zhao for pointing out reference \cite{KinNirSpr}, and to Pablo Mira for his comments and suggestions on an earlier version of this manuscript.

\section{Boundary behavior}

\subsection{Boundary conditions in adapted coordinates} Let $\Omega\subset M$ be a smooth domain with a smooth boundary in a three-dimensional Riemannian manifold $(M,g)$, and let $\Sigma$ be an oriented surface modeled by an elliptic PDE adapted to $\Omega$. For a point $p$ in $\partial(\Sigma\cap\Omega)$, let $h$ denote the function that locally represents $\Sigma$ near $p$ as a graph in some system of adapted coordinates. We compute the equation satisfied by $h$ at points corresponding to $\partial(\Sigma\cap \Omega)$, as a consequence of the capillary boundary condition.

\begin{proposition}\label{prop:condição de bordo}
    Let $\Sigma$ be an immersed capillary surface in $\Omega$ with contact angle $\alpha\in(0,\pi)$. Let $(u,v,z)$ be adapted coordinates  to $(p,T_p\Sigma)$ for some $p\in \partial(\Sigma\cap  \Omega)$, chosen so that $\{u=0\}$ corresponds to $\partial \Omega$ and $(0,v,z)$ are isothermal coordinates on $\partial \Omega$, with conformal factor $\lambda^2>0$. 
    
    If $\Sigma$ is locally parametrized as $X(u,v)=(u,v,h(u,v))$ near $p$, then
    \begin{equation}\label{eq:prop2.1}
     -g_{uu}(0,v,h(0,v))(1+h_v^2(0,v))\cos^2\alpha+\lambda^2(0,v,h(0,v)) h_u(0,v)^2\sin^2\alpha=0
    \end{equation}
    for all $v$ sufficiently close to $0$.
\end{proposition}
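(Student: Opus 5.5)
The plan is to compute the contact angle directly from the Gram data of a few explicit tangent vectors along the boundary curve, and then to use the isothermality of $(v,z)$ on $\{u=0\}$ to simplify. Along $\partial(\Sigma\cap\Omega)$ near $p$, which is the curve $\gamma(v)=(0,v,h(0,v))$, I work with the following vectors, all evaluated at $(0,v,h(0,v))$:
\[
T=\partial_v+h_v\partial_z,\qquad W=-h_v\partial_v+\partial_z,\qquad X_u=\partial_u+h_u\partial_z,\qquad \nu=\frac{\nabla u}{|\nabla u|}.
\]
Here $T=\gamma'$ spans $T\Sigma\cap T\partial\Omega$, and $X_u\in T\Sigma$. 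On $\{u=0\}$ we have $g_{vv}=g_{zz}=\lambda^2$ and $g_{vz}=0$, so $W\in T\partial\Omega$ satisfies $\langle W,T\rangle=0$ and $|W|^2=|T|^2=\lambda^2(1+h_v^2)$. Thus $W/|W|$ is the unit conormal of $\gamma$ inside $\partial\Omega$, and $\nu$ is the unit normal of $\partial\Omega$ pointing into $\{u\ge 0\}=\Omega$.

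The capillary condition says that the plane $T_q\Sigma$ makes the angle $\alpha$ with $T_q\partial\Omega$ along $\gamma$. Equivalently, the unit conormal $\eta$ of $\gamma$ in $\Sigma$ satisfies
\[
\eta=\pm\cos\alpha\,\frac{W}{|W|}+\sin\alpha\,\nu ,
\]
with the sign fixed by the orientation conventions. The equation only involves $\cos^2\alpha$ and $\sin^2\alpha$, so the sign will not matter.

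Next I decompose $X_u$ in the orthogonal frame $\{T/|T|,\,W/|W|,\,\nu\}$ at the boundary point:
\[
X_u=a\,\frac{T}{|T|}+\eta'\qquad\text{with }\eta'\perp T .
\]
Then $\eta'$ is a positive multiple of $\eta$, since $du(X_u)=1>0$ means $X_u$ points into $\Omega$. The ratio of its $\nu$-component to its $W$-component must therefore equal $\pm\tan\alpha$. The two components are:
\begin{itemize}
\item the $\nu$-component, $\langle X_u,\nu\rangle=du(X_u)/|\nabla u|=1/|\nabla u|=(g^{uu})^{-1/2}$;
\item the $W$-component, $\langle X_u,W\rangle/|W|$, in which $g_{uv}$ and $g_{uz}$ appear.
\end{itemize}
Squaring the ratio condition gives an identity of the form
\[
\cos^2\alpha\cdot\frac{1}{g^{uu}}=\sin^2\alpha\,\frac{\langle X_u,W\rangle^2}{\lambda^2(1+h_v^2)} .
\]
I then need to rewrite it in the stated form, with $g_{uu}(1+h_v^2)$ and $\lambda^2h_u^2$.

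The main obstacle is the appearance of the mixed metric coefficients $g_{uv}$ and $g_{uz}$, which are absent from the claimed formula. I would handle this in one of two ways.

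\textbf{First route (volume form).} Compute $\cos\alpha$ intrinsically as $\langle N_\Sigma,\nu\rangle$ with $N_\Sigma\propto X_u\times X_v$. The triple product $\langle X_u\times X_v,\nabla u\rangle=\sqrt{\det g}\;du(\cdot)$ kills most cross terms. Then use that the $2\times2$ minor of $g$ in $(v,z)$ equals $\lambda^4$, together with $\det g=\lambda^4/g^{uu}$ (cofactor formula), to reduce everything to $g^{uu}$ or $g_{uu}$, $\lambda^2$, $h_u$ and $h_v$.

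\textbf{Second route (choice of coordinates).} If the cross terms do not cancel, I would check whether the adapted coordinates can be normalized so that $\partial_u\perp\partial\Omega$ along $\{u=0\}$, for instance by taking $u$ to be the signed distance to $\partial\Omega$. Then $g_{uv}=g_{uz}=0$ and $g^{uu}=1/g_{uu}$ on $\{u=0\}$. In that case $\langle X_u,W\rangle=h_u\lambda^2$, and the identity becomes
\[
\cos^2\alpha\,g_{uu}\,\lambda^2(1+h_v^2)=\sin^2\alpha\,\lambda^4h_u^2 .
\]
Dividing by $\lambda^2$ gives exactly \eqref{eq:prop2.1}.

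I expect the intended proof to rely on this normalization, implicitly through the choice of adapted coordinates, and I would state it explicitly. Since every quantity above is evaluated at $(0,v,h(0,v))$ for each $v$ near $0$, the identity holds along the whole boundary curve, which gives \eqref{eq:prop2.1}.
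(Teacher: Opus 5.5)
Your second route is correct and is essentially the paper's proof: the paper also takes $\partial_u\perp\partial\Omega$ along $\{u=0\}$ (so $g_{uv}=g_{uz}=0$) as part of the hypotheses. It then derives the identity from $\cos^2\alpha=\langle\nu_c,\nu_\Sigma\rangle^2$ together with $g_{vv}=g_{zz}=\lambda^2$, which your decomposition of $X_u$ in the orthonormal frame $\{T/|T|,W/|W|,\nu\}$ reproduces with less algebra than the paper's inversion of the Gram matrices.

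Drop the first route, because without that normalization the identity is false. In the Euclidean chart $x=u$, $y=v$, $w=z+mu$, the plane $\{w=mx\}$ with $m\neq0$ has $h\equiv0$, yet the left-hand side equals $-(1+m^2)\cos^2\alpha=-m^2$. Two smaller points also need fixing. The normalization comes from Fermi coordinates built over isothermal $(v,z)$, not from $u$ being a signed distance alone. It is also compatible with $T_p\Sigma=\mathrm{span}\{\partial_u,\partial_v\}$ only if $\alpha=\pi/2$, so for other angles ``adapted'' must be read loosely, allowing $h_u(0,0)\neq0$, as the paper tacitly does.
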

\begin{proof}
Let $g_{ij}$ denote the coefficients of the metric $g$ with respect to the adapted coordinates $(u,v,z)$; for instance, $g_{uu}=g(\partial_u,\partial_u)$. Note that $g_{uv}=g_{uz}=0$ along $u=0$, since $\partial_u$ is a normal vector field to $\partial \Omega$ at points $(0,v,z)$ by assumption.

Denote 
\begin{equation*}
    \begin{bmatrix}C_{vv} & C_{vz} \\ C_{zv} & C_{zz} \end{bmatrix}=\begin{bmatrix}g(X_v,X_v) & g(X_v,\partial_z) \\ g(\partial_z,X_v) & g(\partial_z,\partial_z) \end{bmatrix}=\begin{bmatrix}g_{vv}+h_v^2g_{zz} & h_vg_{zz} \\ h_vg_{zz} & g_{zz} \end{bmatrix},
\end{equation*}
and let $C^{ij}$ be the coefficients of the inverse matrix $[C_{ij}]^{-1}$; for instance, $C^{vv}=C_{zz}/\det C$.

Let $c$ be the curve given by $v\mapsto X(0,v)=(0,v,h(0,v))$ in $\partial \Omega$, which parametrizes a neighborhood of $p$ in $\partial(\Sigma\cap \Omega)$. The normal vector $\nu_c$ to $c$ in $\partial \Omega$ is given by
\begin{align*}
    \nu_c & = \frac{1}{\sqrt{C^{zz}}}(C^{zv}X_v+C^{zz}\partial_z) \\
    & = \frac{1}{\sqrt{C_{vv}\det C}}(-C_{zv}(\partial_v+h_v\partial_z)+C_{vv}\partial_z). \\
    & = \frac{1}{\sqrt{C_{vv}\det C}}(-h_vg_{zz}\partial_v+g_{vv}\partial_z).
\end{align*}

Denote by
\begin{equation*}
    \begin{bmatrix}G_{uu} & G_{uv} \\ G_{vu} & G_{vv} \end{bmatrix}=\begin{bmatrix}g(X_u,X_u) & g(X_u,X_v) \\ g(X_v,X_u) & g(X_v,X_v) \end{bmatrix} = \begin{bmatrix}g_{uu}+h_u^2g_{zz} & h_vh_{u}g_{zz} \\ h_vh_{u}g_{zz} & g_{vv}+h_v^2g_{zz} \end{bmatrix}
\end{equation*}
the coefficients of the induced metric on $\Sigma$ along $c$, and let $G^{ij}$ be the coefficients of the inverse matrix $[G_{ij}]^{-1}$. The conormal vector of $\Sigma$ along $c$ is then given by
\begin{align*}
    \nu_\Sigma & = \frac{1}{\sqrt{G^{uu}}}(G^{uu}X_u+G^{uv}X_v) \\
     & = \frac{1}{\sqrt{G_{vv}\det G}}(G_{vv}(\partial_u+h_u\partial_z)-G_{uv}(\partial_v+h_v\partial_z)) \\
     & = \frac{1}{\sqrt{G_{vv}\det G}}((g_{vv}+h_v^2g_{zz})\partial_u-h_uh_vg_{zz}\partial_v+h_ug_{vv}\partial_z).
\end{align*}

By the capillary condition with angle $\alpha\in (0,\pi)$, we have
\begin{align*}
    \cos^2\alpha  & = \langle \nu_c,\nu_\Sigma\rangle^2 \\ & = \frac{1}{G_{vv}C_{vv}\det G\det C}(h_uh_v^2g_{zz}^2g_{vv}+h_ug_{vv}^2g_{zz})^2 \\
    & = \frac{h_u^2\lambda^{12}(h_v^2+1)^2}{(g_{vv}+h_v^2g_{zz})^2(g_{uu}g_{vv}+h_u^2g_{vv}g_{zz}+h_v^2g_{uu}g_{zz})g_{vv}g_{zz}} \\
    & = \frac{h_u^2\lambda^2}{g_{uu}+h_u^2\lambda^2+h_v^2g_{uu}},  
\end{align*}
here we use that $g_{vv}=g_{zz}=\lambda^2$ on $u=0$. Therefore, the equation along the curve $c$ becomes
\begin{equation*}
    -g_{uu}(0,v,h(0,v))(1+h_v^2(0,v))\cos^2\alpha+\lambda^2(0,v,h(0,v)) h_u(0,v)^2\sin^2\alpha=0.
\end{equation*}
\end{proof}

\begin{rmk}
Suppose that $h$ and $\tilde{h}$ both describe surfaces as in Proposition \ref{prop:condição de bordo}, modeled by the same elliptic PDE. Then the difference $h-\tilde{h}$ satisfies a linear elliptic PDE on their common domain. Moreover, on points with $u=0$, it follows from \eqref{eq:prop2.1} that
\begin{align*}
    0 = & (h-\widetilde{h})_u\sin\alpha\int_0^1\lambda^\tau d\tau - (h-\widetilde{h})_v\cos\alpha\int_0^1\frac{h_v^\tau}{\sqrt{1+(h_v^{\tau})^2}} d\tau \\
    & +(h-\widetilde{h})\int_0^1\left(\lambda_z^\tau h_u^\tau\sin\alpha-\frac{1}{2}\frac{g^\tau_{uu,z}}{\sqrt{g^\tau_{uu}}}\sqrt{1+(h_v^\tau)^2}\right) d\tau,
\end{align*}
where $h^\tau(0,v)=\tau h(0,v)+(1-\tau)\widetilde{h}(0,v),\quad g_{uu}^\tau(0,v)=g_{uu}(0,v,h^\tau(0,v))$ and $\lambda^\tau(0,v)= \lambda(0,v,h^{\tau}(0,v))$. 

If $\alpha\neq \pi/2$, this defines an oblique boundary condition for $h-\tilde{h}$ (see \cite{gilbarg2001elliptic}, Example (v) in Chapter 10 and Section 17.9). If $\alpha=\pi/2$, it follows from \eqref{eq:prop2.1} that the free boundary condition reduces to a Neumann boundary condition for $h-\tilde{h}$.

\end{rmk}






\subsection{Asymptotic lines on the boundary}

Let $\Sigma$ be a connected surface modeled by an elliptic PDE in adapted coordinates, locally described as a graph $(u, v, h(u, v))$ around a point $p=(0,0,0)$. Let $\widetilde{\Sigma}$ be a surface in a transitive family, modeled by the same elliptic PDE field, that is tangent to $\Sigma$ at $p$, with graphical representation $(u,v,\widetilde{h}(u,v))$. 

If $\Sigma$ is not contained in $\widetilde{\Sigma}$, perhaps after an affine change of coordinates in $(u,v)$, the bilinear form $\sigma$ defined in \eqref{int.2} can be written near $p$ as
\begin{equation}\label{formula sigma}
\sigma=g(\partial_z(0),N(0)) \begin{bmatrix}
    w_{uu} & w_{uv} \\ w_{vu} & w_{vv}
\end{bmatrix}+o((\sqrt{u^2+v^2})^{k-2}),
\end{equation}
where 
\begin{equation*}
    h(u,v)-\widetilde{h}(u,v)=w(u,v)+o((\sqrt{u^2+v^2})^{k}),
\end{equation*} 
and $w(u,v)$ is a nonzero homogeneous harmonic polynomial $w(u,v)$ of degree $k\geq 2$. Here, $o(f(u,v))$ denotes a function $g(u,v)$ such that $g(u,v)/f(u,v)\rightarrow 0$ as $(u,v)\rightarrow (0,0)$. In particular, $\sigma$ has isolated singularities.

The capillary boundary condition determines how the principal line fields of $\sigma$ intersect the boundary of the compact surface $\Sigma\cap \Omega$.
\begin{theorem}\label{thm: mainA}
     Outside its singular set, the principal lines of $\sigma$ on $\Sigma$ are either tangential to $\partial(\Sigma\cap \Omega)$ or intersect it orthogonally.
\end{theorem}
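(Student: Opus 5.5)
The statement is equivalent to the following claim: at every non-singular point $p\in\partial(\Sigma\cap\Omega)$, the tangent vector $T$ of the boundary curve is an eigenvector of $\sigma_p$ with respect to the induced metric. Equivalently, $\sigma_p(T,\nu_\Sigma)=0$, where $\nu_\Sigma$ is the conormal of $\Sigma$ along $c=\partial(\Sigma\cap\Omega)$. The principal directions of $\sigma$ are then $T$ and $\nu_\Sigma$, which gives exactly the dichotomy in Theorem \ref{thm: mainA}.

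So fix $p\in\partial(\Sigma\cap\Omega)$ and let $\widetilde\Sigma=S(p,T_p\Sigma)$. Since $\Sigma$ meets $\partial\Omega$ transversely at $p$, so does $\widetilde\Sigma$. By property (iii) of Definition \ref{def.Transitive Capillary Family}, $\widetilde\Sigma$ meets $\partial\Omega$ at a constant angle along the component of $\widetilde\Sigma\cap\partial\Omega$ through $p$. Since the two surfaces share the tangent plane at $p$, that angle is the same $\alpha$ as for $\Sigma$. The task is to show that the mixed term $II(T,\nu_\Sigma)$ at $p$ is determined by data shared by the two surfaces.

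\textbf{Step 1 (a Joachimsthal-type identity).} Let $N$ be the unit normal of $\Sigma$ and $\bar N$ the unit normal of $\partial\Omega$. The constant-angle condition along $c$ reads $g(N,\bar N)=\cos\alpha$ (up to sign convention). Differentiate along $T$, using $\nabla_T N=-A_\Sigma T$ and $\nabla_T\bar N=-A_{\partial\Omega}T$ (with consistent signs):
\begin{equation*}
0=-g(A_\Sigma T,\bar N)-g(N,A_{\partial\Omega}T).
\end{equation*}
Along $c$ we have $\bar N=\cos\alpha\, N+\sin\alpha\,\nu_\Sigma$ up to orientation. The term $g(A_\Sigma T,N)$ vanishes, so the first term equals $\sin\alpha\, II^\Sigma(T,\nu_\Sigma)$. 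For the second term, decompose $N=\cos\alpha\,\bar N\pm\sin\alpha\,\nu_c$, where $\nu_c$ is the conormal of $c$ in $\partial\Omega$. This gives $\pm\sin\alpha\, II^{\partial\Omega}(T,\nu_c)$. Dividing by $\sin\alpha\neq0$ yields
\begin{equation*}
II^\Sigma_p(T,\nu_\Sigma)=\mp II^{\partial\Omega}_p(T,\nu_c).
\end{equation*}
The right-hand side depends only on $\partial\Omega$, the point $p$ and the vectors $T,\nu_c$.

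\textbf{Step 2 (shared data).} The vectors $T=T_p\Sigma\cap T_p\partial\Omega$, $\nu_\Sigma$ and $\nu_c$ at $p$ are determined by $T_p\Sigma$, the angle $\alpha$ and $T_p\partial\Omega$. All of these coincide for $\Sigma$ and $\widetilde\Sigma$. Applying Step 1 to both surfaces and subtracting gives $\sigma_p(T,\nu_\Sigma)=0$.

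\textbf{Step 3 (conclusion).} Away from zeros of $\sigma$, $T$ is therefore an eigenvector of the self-adjoint endomorphism associated with $\sigma$. Hence the two principal line fields are tangent and orthogonal to $\partial(\Sigma\cap\Omega)$ at $p$. If $\sigma_p$ is a nonzero multiple of the metric, every direction is principal; such umbilic-type points count as singular. In the generic setting these are excluded by the isolated-zero structure from \eqref{formula sigma}, with the singular set understood to include them.

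\textbf{Main obstacle.} The delicate part is the orientation and sign bookkeeping in Step 1. One must check that the sign $\mp$ and the choice of conormals agree for $\Sigma$ and $\widetilde\Sigma$. This holds because the two surfaces have the same oriented tangent plane and the same angle, but it should be stated carefully.

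A second point is regularity. Step 1 differentiates $N$ along $c$ up to the boundary, and $\widetilde\Sigma$ needs constant angle only along the component of $\widetilde\Sigma\cap\partial\Omega$ through $p$; property (iii) of Definition \ref{def.Transitive Capillary Family} provides exactly this. As an alternative route, one could derive the same identity by differentiating \eqref{eq:prop2.1} in $v$ at $v=0$ for both $h$ and $\tilde h$. Since $h$ and $\tilde h$ have equal first derivatives at the origin, this would force $(h-\tilde h)_{uv}(0,0)=0$ in those coordinates.
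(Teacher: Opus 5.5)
Your proof is correct. It uses the same basic idea as the paper: differentiate the constant-angle condition along the boundary curve and subtract the corresponding identity for $\widetilde\Sigma$. You carry this out intrinsically, whereas the paper works in coordinates.

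The paper works with the graphs $h,\widetilde h$ in the coordinates behind \eqref{eq:prop2.1}. For $\alpha=\pi/2$ it reads off $h_u(0,v)\equiv0$. For $\alpha\neq\pi/2$ it differentiates $\cos\alpha=\sqrt{G^{uu}/B^{uu}}$ along the boundary, subtracts the identity for $\widetilde\Sigma$, and uses $B^{uz}\neq0$ to get $(h-\widetilde h)_{uv}(0,0)=0$, i.e.\ $w_{uv}(0,0)=0$ in \eqref{formula sigma}. You instead differentiate $g(N,\bar N)=\cos\alpha$ and obtain the Joachimsthal-type identity $II^\Sigma(T,\nu_\Sigma)=\mp II^{\partial\Omega}(T,\nu_c)$.

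Your route has three advantages:
\begin{itemize}
\item It handles every $\alpha\in(0,\pi)$ at once, with no case split.
\item It explains why the mixed term cancels: that term is determined by $\partial\Omega$ and by first-order data at $p$, which the two surfaces share.
\item It is independent of the coordinate normalization. Requiring both $\partial_u\perp\partial\Omega$ and $T_p\Sigma=\mathrm{span}\{\partial_u,\partial_v\}$ forces $\alpha=\pi/2$. So in the capillary case the paper's computation tacitly uses a graph with $h_u(0,0)\neq0$.
\end{itemize}
The paper's version, in turn, stays in the coordinates of \eqref{formula sigma} and ties the boundary behaviour of $\sigma$ directly to the boundary condition on $h-\widetilde h$.

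There are two small clean-ups.

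First, the sign bookkeeping you flag disappears if you do not decompose the second term. At $p$ the differentiated identity is $g(A_\Sigma T,\bar N)=-g(N,A_{\partial\Omega}T)$. The right-hand side is literally the same for $\widetilde\Sigma$, because $N(p)=\widetilde N(p)$. The tangential part $\bar N(p)^{\top}=\pm\sin\alpha\,\nu_\Sigma$ is also common to both surfaces. Subtracting gives $\sin\alpha\,\sigma_p(T,\nu_\Sigma)=0$ directly. Since both sides are linear in $T$, the ambiguity $\widetilde T=\pm T$ is harmless.

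Second, the umbilic-type caveat in Step 3 is unnecessary, and \eqref{formula sigma} is not what excludes it. At each point, $h$ and $\widetilde h$ share first-order data in adapted coordinates. The mean value theorem then gives a linear relation between the entries of the Hessian difference, and together with $4\Phi_r\Phi_t-\Phi_s^2>0$ this forces $\sigma_q$ to be either zero or indefinite. That is exactly why $\sigma$ is Lorentzian off its isolated zeros. So at nonsingular boundary points $\sigma$ has eigenvalues of opposite signs, and $T,\nu_\Sigma$ are precisely its two principal directions.
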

\begin{proof} To study the behavior of the line fields at a point $p\in \partial (\Sigma\cap\Omega)$, we choose adapted coordinates $(u,v,z)$ around $p$ as the one chosen in Proposition \ref{prop:condição de bordo}.


We consider the parametrizations $X(u,v)=(u,v,h(u,v))$ for $\Sigma$ and $\widetilde{X}(u,v)=(u,v,\widetilde{h}(u,v))$ for the comparison surface $S(p,T_p\Sigma)=:\widetilde{\Sigma}$ in the transitive family, with $p=(0,0,0)$ and $h_v(0,0)=\widetilde{h}_v(0,0)=0$.

We first analyze the free boundary case, that is, when the contact angle $\alpha=\pi/2$. Substituting $\alpha=\pi/2$ into \eqref{eq:prop2.1}, we obtain $h_u(0,v)=0$, and hence $h_{uv}(0,0)=0$. 
Taking the difference with the analogous identity for $\widetilde{\Sigma}$, we obtain
\begin{equation*}
    w_{uv}(0,0)=(h-\widetilde{h})_{uv}(0,0)=0
\end{equation*}
where $w$ is the homogeneous harmonic polynomial appearing in \eqref{formula sigma}. Therefore, the directions $X_u(0,0)=\partial_u(p)$ and $X_v(0,0)=\partial_v(p)$ are principal directions of $\sigma$. In particular, $\partial_v(p)$ is not an asymptotic direction of $\sigma$.  The Theorem is proven in this case.

We now turn to the non free boundary case, that is, constant contact angle $\alpha\neq \pi/2$. Let $\nu_{\Sigma\cap \Omega}$ denote the conormal vector of $\Sigma\cap\Omega$, and $N_{\partial\Omega}$ denote the normal vector to $\partial\Omega$. Keeping the same notation from the proof of Proposition \ref{prop:condição de bordo}, we write
    \begin{align*}
        \nu_{\Sigma\cap \Omega}& =\frac{1}{\sqrt{G^{uu}}}(G^{uu}X_u+G^{uv}X_v) \\
        N_{\partial \Omega}& =\frac{1}{\sqrt{B^{uu}}}(B^{uu}X_u+B^{uv}X_v+B^{uz}\partial_z),
    \end{align*}
where $B_{ij}=g(X_i,X_j)$. (Here we abuse notation by writing $X_z=\partial_z$).

The capillary condition implies that the vectors $\nu(t)=\nu_{\Sigma\cap\Omega}(c(t))$ and $N(t)=N_{\partial \Omega}(c(t))$ satisfy
\begin{align*}
    \cos(\alpha) = & g(\nu(t),N(t)) \\
    = & \frac{1}{\sqrt{G^{uu}}}[G^{uu}g(X_u,N)+G^{uv}g(X_v,N)] \\
    = & \sqrt{G^{uu}}g(X_u,N)= \frac{\sqrt{G^{uu}}}{\sqrt{B^{uu}}}.
\end{align*}
Differentiating with respect to $t$, we obtain
\begin{align*}
    0 & = \frac{\sqrt{B^{uu}}}{2\sqrt{G^{uu}}}
    \ \frac{(G^{uu})_{,v}B^{uu}-(B^{uu})_{,v}G^{uu}}{(B^{uu})^2} \\
    & = \frac{\sqrt{B^{uu}}}{2\sqrt{G^{uu}}}\ \frac{B^{ui}B^{uj}B_{ij,v}G^{uu}-G^{ui}G^{uj}G_{ij,v}B^{uu}}{(B^{uu})^2},
\end{align*}
and hence
\begin{equation}\label{eq:derivada ao longo de c}
B^{ui}B^{uj}B_{ij,v}G^{uu}-G^{ui}G^{uj}G_{ij,v}B^{uu} = 0.
\end{equation}
Recall that $\partial_u$ is normal to $\partial\Omega$ and that $(0,v,z)$ are isothermal coordinates on $\partial\Omega$ in our choice of adapted coordinates. Also, $h_v(0,0)=\widetilde{h}_v(0,0)=0$. Since
\begin{equation*}    
    G(p)=\begin{bmatrix}
            G_{uu} & 0 \\
            0 & G_{vv}
        \end{bmatrix} \text{ and }
    B(p)=\begin{bmatrix}
            G_{uu} & 0 & B_{uz} \\
            0 & G_{vv} & 0 \\
            B_{uz} & 0 & B_{zz}
        \end{bmatrix},
\end{equation*}
and the inverse of this matrix at $p$ 
\begin{equation*}
    B(p)^{-1}=\frac{1}{G_{uu}B_{zz}-B_{uz}^2}\begin{bmatrix}
            B_{zz} & 0 & -B_{uz} \\
            0 & (G_{uu}B_{zz}-B_{uz}^2)/G_{vv} & 0 \\
            -B_{zu} & 0 & B_{uu}
        \end{bmatrix},
\end{equation*}
then, at $p$, equation \eqref{eq:derivada ao longo de c} yields
\begin{align*}
    0 = & (B^{uu})^2B_{uu,v}+2B^{uu}B^{uz}B_{uz,v}+(B^{uz})^2B_{zz,v}-G^{uu}B^{uu}G_{uu,v} \\
    = & B^{uu}G_{uu,v}(B^{uu}-G^{uu})+B^{uz}(2B^{uu}B_{uz,v}+B^{uz}B_{zz,v}) \\
    = & B^{uu}G_{uu,v}\left(\frac{B_{zz}}{G_{uu}B_{zz}-B_{uz}^2}-\frac{1}{G_{uu}}\right)+B^{uz}(2B^{uu}B_{uz,v}+B^{uz}B_{zz,v}) \\
    = & B^{uu}G_{uu,v}\frac{B_{uz}^2}{(G_{uu}B_{zz}-B_{uz}^2)G_{uu}}+B^{uz}(2B^{uu}B_{uz,v}+B^{uz}B_{zz,v}) \\
    = & -B^{uu}G_{uu,v}\frac{B_{uz}B^{uz}}{G_{uu}}+B^{uz}(2B^{uu}B_{uz,v}+B^{uz}B_{zz,v}).
\end{align*}
Taking the difference between the above expressions evaluated on $\Sigma$ and $\widetilde{\Sigma}$ at $p$, we obtain
\begin{align}\label{eq:auxiliar}
    0
    = & -B^{uu}\frac{B_{uz}B^{uz}}{G_{uu}}(G_{uu}-\widetilde{G}_{uu})_{,v} + 2B^{uu}B^{uz}(B_{uz}-\widetilde{B}_{uz})_{,v},
\end{align}
where we used that $B_{zz}=g(\partial_z,\partial_z)\circ c(t)$, $\widetilde{B}_{zz}=g(\partial_z,\partial_z)\circ \widetilde{c}(t)$, so that $B_{zz,v}=\widetilde{B}_{zz,v}$ at $t=0$, since $c(0)=\widetilde{c}(0)=p$ and $c'(0)=\widetilde{c}'(0)=\partial_v(p)$. 

Recall that $h_u(0,0)-\widetilde{h}_u(0,0)=0$, since the tangent planes of $\Sigma$ and $\widetilde{\Sigma}$ coincide at $p=(0,0,0)$. Since
\begin{equation*}
    G_{uu}=g_{uu}+2h_u g_{uz} + h^2_u g_{zz} \text{ and } \widetilde{G}_{uu}=g_{uu}+2\widetilde{h}_u g_{uz} + \widetilde{h}^2_u g_{zz},
\end{equation*}
and similarly
\begin{equation*}
    B_{uz}=g_{uz}+h_u g_{zz} \text{ and } \widetilde{B}_{uz}=g_{uz}+\widetilde{h}_u g_{zz},
\end{equation*}
differentiating with respect to $v$ at $p$ and substituting into \eqref{eq:auxiliar}, we obtain
\begin{align*}
    0= & -2B_{uz}B^{uz}(h-\widetilde{h})_{uv}B_{uz}+2B^{uz}G_{uu}(h-\widetilde{h})_{uv} B_{zz} \\
    = & 2B^{uz}(h-\widetilde{h})_{uv}(B_{zz}G_{uu}-B_{uz}^2).
\end{align*}
Note that $B_{zz}G_{uu}-B^2_{uz}\neq 0$. Therefore, at $(0,0)$,
\begin{equation*}
    (h-\widetilde{h})_{uv}=0 \text{ or } B^{uz}=0. 
\end{equation*}

Since we are now analyzing the case of constant angle $\alpha\neq \pi/2$, the second possibility cannot occur. Thus
\begin{equation*}
    (h-\widetilde{h})_{uv}(0,0)=0 
\end{equation*}
and the conclusion follows as in the free boundary case.    
\end{proof}

\section{Index of singularities and proof of the main result}





\subsection{Principal lines near singularities}


Let $\Sigma$ be a capillary surface modeled by the same elliptic PDE that defines the transitive family. Assume that $\Sigma$ is not contained in any element of the transitive family. Since $\Sigma$ is oriented and $\sigma$ defines a Lorentzian metric on $\Sigma$, except at isolated singularities, it follows that $\sigma$ induces two asymptotic line fields $L_1$ and $L_2$ on $\Sigma$, each with isolated singularities.

We say that a line field in a region is \emph{regular} if it is integrable, that is, if for every point there exists a $C^1$ regular curve $\gamma$, defined in a neighborhood of $0$, such that  $\gamma'(t)$ belongs to the line field at $\gamma(t)$ for all $t$. (See \cite{Hopf1946}, Chapter III, Section 1.1).

The next proposition is a consequence of Proposition \ref{thm: mainA} and shows that $\sigma$ induces a regular line field on the double surface $S:=\widehat{\Sigma}\cup_{id}\widehat{\Sigma}$, obtained by gluing two copies of $\widehat{\Sigma}:=\Sigma\cap \Omega$ along their boundaries via the identity map. To distinguish the two copies of $\widehat{\Sigma}$ in the double surface, we denote them by $\widehat{\Sigma}_i,\ i=1,2$, and use corresponding subscripts for objects defined on each copy. 

\begin{proposition}\label{prop:hipoteseshopf}
    Let $L$ be the line field on $S:=\widehat{\Sigma}_1\cup_{id}\widehat{\Sigma}_2$ defined by
    \begin{equation}
        L(x)=\begin{cases}
            L_1(x), & x\in \widehat{\Sigma}_1 \\ L_2(x), & x\in \widehat{\Sigma}_2\setminus \widehat{\Sigma}_1.
        \end{cases}
    \end{equation}
    Then $L$ is a regular line field on $S$, possibly with isolated singularities.
\end{proposition}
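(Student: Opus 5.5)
The plan is to check regularity of $L$ separately at interior points of the two copies and at points of the common boundary $\partial\widehat{\Sigma}$. At points of $\widehat{\Sigma}_i\setminus\partial\widehat{\Sigma}$, $L$ coincides with $L_1$ or $L_2$. These are the null (asymptotic) line fields of the $C^1$ form $\sigma$, which is Lorentzian away from its isolated zeros. Locally they are spanned by $C^1$ vector fields, obtained by solving the quadratic equation $\sigma(X,X)=0$ with nonvanishing discriminant. Peano's theorem then gives $C^1$ integral curves, and the singular set consists of the isolated zeros of $\sigma$, which stay isolated in the double. So the whole issue is at the seam $\partial\widehat{\Sigma}$, away from zeros of $\sigma$; boundary zeros of $\sigma$ are simply added to the isolated singular set.

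\textbf{Smooth structure on the double.} Near a point $p\in\partial\widehat{\Sigma}$, I would use Fermi-type coordinates $(s,t)$ on $\widehat{\Sigma}$: $t$ is arclength along $\partial\widehat{\Sigma}$ and $s\ge 0$ is the distance along geodesics normal to the boundary. In these coordinates the induced metric is $ds^2+\phi(s,t)^2dt^2$ with $\phi(0,t)=1$. The double gets the chart $(s,t)$ on $\widehat{\Sigma}_1$ and $(-s,t)$ on $\widehat{\Sigma}_2$. With this structure, the map $(s,t)\mapsto(-s,t)$ at $s=0$ acts on tangent vectors as the reflection across $T\partial\widehat{\Sigma}$, which is an isometry of $T_p\widehat{\Sigma}$.

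\textbf{Behaviour of $L_1,L_2$ at the boundary.} By Theorem \ref{thm: mainA}, at a boundary point $p$ with $\sigma_p\neq0$ the principal directions of $\sigma$ with respect to the induced metric are $e_1=\partial_t$ (tangent) and $e_2=\partial_s$ (normal). Hence $\sigma_p=a\,e_1^\flat\otimes e_1^\flat+b\,e_2^\flat\otimes e_2^\flat$ with $ab<0$, and the two null directions are $e_1\pm\sqrt{-a/b}\,e_2$. Three consequences follow:
\begin{itemize}
\item[(a)] Neither null direction is tangent to $\partial\widehat{\Sigma}$, since $b\neq 0$.
\item[(b)] $L_1(p)$ and $L_2(p)$ are mirror images under reflection across $T_p\partial\widehat{\Sigma}$.
\item[(c)] In the chart of the double, the direction $L_2(p)$ read on copy $2$ is exactly $L_1(p)$ read on copy $1$.
\end{itemize}
One point needs care: which of the two null fields is called $L_1$ must be chosen consistently along the boundary. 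The labelling is fixed globally by the orientation of $\Sigma$ (for instance, $L_1$ is the null line at oriented angle $+\theta$ from the principal line), so (c) holds with the correct pairing.

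\textbf{Continuity and integrability across the seam.} By (c), the $C^1$ spanning fields of $L_1$ on $\{s\ge0\}$ and of the transported $L_2$ on $\{s\le0\}$ agree at $s=0$, so $L$ is continuous there. For regularity, take the $C^1$ integral curve of $L_1$ through $p$ in $\widehat{\Sigma}_1$, and the $C^1$ integral curve of $L_2$ through $p$ in $\widehat{\Sigma}_2$. By (a) each is transverse to the boundary, so it enters its copy on exactly one side. Concatenate them at $p$. Their tangent vectors at $p$ coincide in the chart of the double by (c), so the concatenation is a $C^1$ regular curve tangent to $L$. For points near but off the seam, the integral curves are already available inside one copy. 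If needed, they can be continued through the seam by the same concatenation, using transversality and the local $C^1$ extension of the spanning vector fields to a neighbourhood.

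The main obstacle is (c), together with the consistent labelling behind it: one has to verify that the chosen doubling really turns the reflection symmetry coming from Theorem \ref{thm: mainA} into $C^0$ agreement of $L_1$ with the transported $L_2$. The key point is that the reflection in the Fermi chart is orthogonal with respect to the induced metric at $s=0$, which is exactly what (b) needs. I would handle the labelling through the orientation-based convention above.
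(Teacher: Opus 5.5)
Your proposal is correct and is essentially the paper's argument. The paper works in isothermal coordinates $(\theta,t)$ on a collar of the boundary component, so that the gluing $t\mapsto -t$ is conformal along the seam, instead of your Fermi coordinates. It then uses Theorem~\ref{thm: mainA} to see that $L_1(p)$ and $L_2(p)$ are mirror images across $T_p\partial\widehat{\Sigma}$, and concatenates the two unit-speed integral curves from $p$ into a $C^1$ curve tangent to $L$, exactly as you do.

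The labelling issue you single out as the main obstacle is harmless: the reflection swaps the two distinct null lines at $p$, so (c) holds for any continuous choice of $L_1,L_2$. The only property of the coordinates you actually use is $\partial_s\perp\partial_t$ along $s=0$, and isothermal coordinates provide this with less regularity of the induced metric than Fermi coordinates require.
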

\begin{proof}
    It suffices to analyze nonsingular points $p$ of $\sigma$ lying in $\partial\widehat{\Sigma}$, that is, points along the gluing subset. 
    
    In a neighborhood $V=\mathbb{S}^1\times [0,1)$ of the connected component of $\partial\widehat{\Sigma}$ containing $p$, consider isothermal coordinates $(\theta,t)$ such that $p=(0,0)$. Let $\alpha:[0,\varepsilon)\rightarrow V_1$ and $\beta:[0,\varepsilon)\rightarrow V_2$ be curves with $\alpha(0)=\beta(0)=p$, satisfying $\alpha'(t)\in L_1(\alpha(t))$, $\beta'(t)\in L_2(\beta(t))$, and $|\alpha'(t)|=|\beta'(t)|=1$ for all $t\in[0,\varepsilon)$.

    Since asymptotic directions are invariant under reflection across principal lines, Theorem \ref{thm: mainA} implies that $\alpha'(0)=b\partial_\theta+c\partial_t$ and $\beta'(0)=-b\partial_\theta+c\partial_t$, for some constant $b,c\neq 0$. 
    
    The attaching map $\varphi: V_1\sqcup  V_2\rightarrow \mathbb{S}^1\times(-1,1)$ is given by    
    \begin{equation*}
        \varphi(x)=\begin{cases}
            (\theta,t), & x=(\theta,t)\in V_1, \\
            (\theta,-t), & x=(\theta,t)\in V_2.
        \end{cases}
    \end{equation*}
    Now define $\gamma:(-\varepsilon,\varepsilon)\rightarrow S$ by
    \begin{equation*}
        \gamma(s)=\begin{cases}
            \varphi\circ\alpha(s),& s\geq 0, \\ \varphi\circ\beta(-s),& s<0.
        \end{cases}
    \end{equation*}
    Since $\gamma_-'(0)=b\partial_\theta+c\partial_t=\gamma_+'(0)$, the curve $\gamma$ is a $C^1$ and satisfies $\gamma'(t)\in L(\gamma(t))$ for all $t\in (-\varepsilon,\varepsilon)$, as we wanted to prove.
\end{proof} 
By Theorem A in \cite{Bers1956}, if $\sigma$ as in \eqref{formula sigma} is not identically zero, then $w(u,v)=\text{Re}(\alpha z^n)$ for some complex $\alpha\neq 0$ and $n\geq 2$, where $z=u+iv$. The asymptotic lines of $\text{Hess}(w)$ are just a rotation of asymptotic lines generated by $\text{Hess(Re}(z^n))$. When $n\geq 3$, the point $(0,0)$ is a singularity of $\sigma$.

The differential equation of the asymptotic lines of $Hess(w)$ is
\begin{equation*}
    w_{uu}du^2+2w_{uv}dudv-w_{uu}dv^2=0.
\end{equation*}
Writing $z=re^{i\theta}$, we have 
\begin{equation*}
    w_{uu}=n(n-1)r^{n-2}\cos((n-2)\theta),
\end{equation*}
and
\begin{equation*}
    w_{uv}=-n(n-1)r^{n-2}\sin((n-2)\theta), 
\end{equation*}
so the slope of the asymptotic lines depends only on $\theta$:
\begin{equation*}
    \frac{dv}{du}(\theta)=-\tan((n-2)\theta)\pm\sec((n-2)\theta)=:\zeta(\theta).
\end{equation*}

It will be useful to establish a way to compute the angular variation between two vector fields along a curve (see \cite{docarmo2016} Lemma 1, section 4.4). Consider $L(t)$ and $E(t)$ be $C^1$ unit vector fields along a curve $\alpha: [0,1] \to S$. Denote by $\{E(t),E^\perp(t)\}$ an oriented unit orthonormal basis for all $t$. Writing 
\begin{equation*}
    L(t)=a(t)E(t)+b(t)E^\perp(t).
\end{equation*}

 and fixing $\varphi_0$ such that $L(0)=\cos(\varphi_0)E(0)+\sin(\varphi_0)E^\perp(0)$, there exists a $C^1$ function $\varphi: [0,1] \to \mathbb{R}$ such that $L(t)=\cos(\varphi(t))E(t)+\sin(\varphi(t))E^\perp(t)$. Precisely,
\begin{equation}\label{angularvariation}
    \varphi(t)=\varphi_0+\int_0^tF(t)dt
\end{equation}
where $F(t)=a(t)b'(t)-b(t)a'(t)$.

\begin{theorem}\label{thm:negativeindex}
    Let $S$ be the double surface $\widehat{\Sigma}\cup_{id}\widehat{\Sigma}$, and let $L$ be the line field defined in the Proposition \ref{prop:hipoteseshopf}. Then every singularity of $L$ has negative index.
\end{theorem}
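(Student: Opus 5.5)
Every singularity of $L$ is an isolated zero of $\sigma$ lying in the interior of one of the copies $\widehat{\Sigma}_i$. Indeed, by Proposition \ref{prop:hipoteseshopf}, points of the gluing curve at which $\sigma\neq 0$ are regular points of $L$. A zero of $\sigma$ on $\partial\widehat{\Sigma}$ is still an interior point of $\Sigma$, and this case is handled at the end. So the plan is local: fix a singular point $q$, use the adapted coordinates $(u,v,z)$ around $q$, and work with the expansion \eqref{formula sigma} together with Bers' theorem. These give $h-\widetilde h=w+o(r^n)$ with $w=\mathrm{Re}(\alpha z^n)$, $\alpha\neq0$. Since $\sigma$ is a Lorentzian form away from isolated zeros, $\sigma$ cannot be definite, so the degree satisfies $n\ge 2$. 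If $n=2$, then $\mathrm{Hess}(w)$ is nondegenerate at $q$ and $q$ is not a singular point. Hence at a genuine singularity $n\ge3$. After rotating coordinates (which does not change indices) we may take $w=\mathrm{Re}(z^n)$.

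The first step is to compute the index for the model line field. The asymptotic directions of $\mathrm{Hess}(\mathrm{Re}\,z^n)$ are those of the quadratic differential $\mathrm{Re}(z^{n-2}dz^2)$, up to rotation. Equivalently, by the formulas for $w_{uu},w_{uv}$ above, the slope $\zeta(\theta)$ corresponds to the direction angle $\psi(\theta)$ solving $\cos((n-2)\theta+2\psi)=0$. This gives
\[
\psi(\theta)=-\tfrac{n-2}{2}\theta+\tfrac{\pi}{4}+k\tfrac{\pi}{2}.
\]
As $\theta$ runs from $0$ to $2\pi$, $\psi$ changes by $-(n-2)\pi$, so each asymptotic line field of the model has index $-(n-2)/2\le -1/2<0$. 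I will check this directly through \eqref{angularvariation}, taking $E=\partial_u$ along the circle $r=\varepsilon$.

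The second step, which I expect to be the main obstacle, is to transfer this index from the model to $\sigma$ itself. On the circle $r=\varepsilon$, write $\sigma=g(\partial_z,N)\,r^{n-2}\bigl(\widehat{H}(\theta)+o(1)\bigr)$, where $\widehat H(\theta)$ is the angular part of $\mathrm{Hess}(w)$. The matrix $\widehat H(\theta)$ is trace-free and nonzero, with norm bounded below uniformly in $\theta$. Consequently the asymptotic directions of $\sigma$, which depend continuously on the normalized coefficients of a nondegenerate indefinite form, are uniformly $o(1)$-close to those of $\widehat H(\theta)$. The metric $g$ is not Euclidean in these coordinates, but $\mathrm{Hess}(w)$ is trace-free only with respect to the Euclidean metric. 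To handle this, I will note that $\sigma$ is a bilinear form, so its asymptotic (null) directions do not depend on the metric at all. It therefore suffices to compare the null cones of two indefinite forms in the fixed $(u,v)$-plane. Two continuous line fields along a circle that remain closer than $\pi/4$ have the same winding number, so the index of $L_i$ at $q$ equals the model index $-(n-2)/2<0$.

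Finally, I pass to the double $S$. For an interior singularity in $\widehat\Sigma_1$ the line field is $L_1$, and for one in $\widehat\Sigma_2$ it is $L_2$; both have the computed negative index. It remains to treat singularities on the gluing curve. There I would combine Theorem \ref{thm: mainA}, which says that near $q$ the principal lines are tangent or orthogonal to the boundary, with the symmetric gluing of Proposition \ref{prop:hipoteseshopf}. The index is the sum of the angular variations of $L_1$ along a half-circle in $\widehat\Sigma_1$ and of $L_2$ along the reflected half-circle. By the reflection symmetry, this sum equals the full-circle variation of $L_1$ in the model, which is again negative. This boundary case is the second delicate point, and I would verify it by the explicit formula for $\psi(\theta)$ split over $\theta\in[0,\pi]$ and $[\pi,2\pi]$.
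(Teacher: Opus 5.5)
Your proposal is correct and follows the paper's proof: use Bers' expansion to reduce to the model asymptotic line field of $\mathrm{Hess}(\mathrm{Re}(\alpha z^n))$ with $n\ge 3$, then compute its total angular variation $-(n-2)\pi$ along a small circle, giving index $-(n-2)/2<0$. Your perturbation step (the null lines of $\sigma$ stay within $\pi/4$ of the model's) and your half-circle treatment of zeros on the gluing curve spell out what the paper compresses into the remark that $L$ is a small perturbation of the model on both sides of the double; note only that $n\ge 2$ comes from the tangency of $\Sigma$ and $S(q,T_q\Sigma)$ at $q$, not from $\sigma$ being indefinite.
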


\begin{proof}
    Let $p\in S$ be a singular point of $L$. Since around $p$, on both sides of the double $S$, the field $L$ is a small perturbation of the asymptotic lines determined by $Hess(w)$, it suffices to compute the index of this model field, denoted $L'$. 

    Consider a small circle centered in $p$, and let $V$ be a unit vector field tangent to $L'$ along this circle. In coordinates $(u,v)$,
    \begin{equation*}
        V(\theta)=\left( \frac{1}{\sqrt{1+\zeta(\theta)^2}},\frac{\zeta(\theta)}{\sqrt{1+\zeta(\theta)^2}}\right).
    \end{equation*}
    Using \eqref{angularvariation}, the angular variation of $V$ along the arc $\theta_0\leq\theta\leq\theta_1$ is computed with respect to the Euclidean metric $du^2+dv^2$ as follows:
    \begin{align*}
        \delta\measuredangle= & \int_{\theta_0}^{\theta_1}\left[\frac{1}{\sqrt{1+\zeta(\theta)^2}}\left(\frac{\zeta(\theta)}{\sqrt{1+\zeta(\theta)^2}}\right)'-\left(\frac{1}{\sqrt{1+\zeta(\theta)^2}}\right)'\frac{\zeta(\theta)}{\sqrt{1+\zeta(\theta)^2}}\right]d\theta \\
        = & \int_{\theta_0}^{\theta_1} \frac{\zeta'(\theta)}{1+\zeta(\theta)^2}d\theta \\
        = & (n-2)\int_{\theta_0}^{\theta_1}\frac{-\sec^2((n-2)\theta)\pm \sec((n-2)\theta)\tan((n-2)\theta)}{2\sec^2((n-2)\theta)\mp2\sec((n-2)\theta)\tan((n-2)\theta)}d\theta \\
        = & -\frac{n-2}{2}(\theta_1-\theta_0)<0.
    \end{align*}
    Thus, the total angular variation is $-(n-2)\pi$. Since it does not depend on the metric used to compute it, the index of the singularity of $\sigma$ is the negative number $-(n-2)/2$.
\end{proof}

\subsection{Proof of the main theorem}

We now have all the ingredients needed to prove our main result (Theorem \ref{int.theo.1}). For the reader's convenience, we restate it here:

\begin{theorem}\label{main.theorem}
Let $\Omega$ be a smooth domain in a complete orientable Riemannian three-manifold $(M,g)$. Let $\mathcal{A}$ be a class of immersed oriented surfaces in $M$ such that:
    \begin{itemize}
        \item $\mathcal{A}$ is modeled by an elliptic PDE (adapted to $\Omega$) on $M$;
        \item there exists a transitive capillary family $\mathcal{S}\subset\mathcal{A}$ in $M$ with respect to $\Omega$.
    \end{itemize}
 If an immersed connected oriented surface $\Sigma\in \mathcal{A}$ intersects $\Omega$ in an immersed disk satisfying the capillary boundary condition with respect to $\Omega$, then $\Sigma$ is contained in a surface $S\in \mathcal{S}$ that intersects $\partial \Omega$.
\end{theorem}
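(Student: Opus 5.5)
The argument is by contradiction, combining the results of the previous sections with the Poincaré--Hopf theorem on the double of the disk. Suppose that $\Sigma$ is not contained in any member of $\mathcal{S}$. We work with the bilinear form $\sigma$ defined in \eqref{int.2} on $\widehat{\Sigma}=\Sigma\cap\Omega$. The family $\mathcal{S}$ is $C^3$ and transitive over points of $\Omega$, including points of $\partial\Omega$, so $\sigma$ is well defined and $C^1$ on a neighborhood of $\widehat{\Sigma}$ in $\Sigma$.

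First, we show that the zeros of $\sigma$ are isolated. Fix $q\in\widehat{\Sigma}$ and work in adapted coordinates to $(q,T_q\Sigma)$. Since $\Sigma$ and $S(q,T_q\Sigma)$ are modeled by the same elliptic PDE, the difference $h-\widetilde h$ solves a linear elliptic equation. By Bers' theorem (Theorem A of \cite{Bers1956}), either $h-\widetilde h$ vanishes identically near $q$, or \eqref{formula sigma} holds with $w=\text{Re}(az^n)$, $a\neq 0$, $n\geq 2$. Because $q$ is an interior point of $\Sigma$, this applies even when $q\in\partial\Omega$.

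In the first alternative, $\Sigma$ agrees with $S(q,T_q\Sigma)$ near $q$. We then run an open--closed argument on $\Sigma$. The set of points $q$ near which $\Sigma$ coincides with $S(q,T_q\Sigma)$ is open. It is also closed: $\mathcal{S}$ depends continuously on $(p,\Pi_p)$, and $\mathcal{L}_S$ is an embedding, so along an open subset of $\Sigma$ lying in $S$ the member $S(q,T_q\Sigma)$ is constant. Since $\Sigma$ is connected, $\Sigma$ would be contained in a single $S\in\mathcal{S}$, contradicting our assumption. Hence every zero of $\sigma$ on $\widehat{\Sigma}$ is of the second kind.

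Next, we rule out zeros on $\partial\widehat{\Sigma}$ and bound the indices. Outside the zero set, $\sigma$ is Lorentzian, so it determines two asymptotic line fields $L_1,L_2$ with isolated singularities. Since $\widehat{\Sigma}$ is compact, there are finitely many singularities. By Theorem \ref{thm: mainA}, at nonsingular boundary points the principal directions of $\sigma$ are tangent or normal to the boundary. By Proposition \ref{prop:hipoteseshopf}, the field $L$ defined by $L_1$ on one copy and $L_2$ on the other is a regular line field on the double $S=\widehat{\Sigma}\cup_{id}\widehat{\Sigma}$. Since $\widehat{\Sigma}$ is a disk, $S$ is a topological sphere. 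Singular points in the interior of either copy have index $-(n-2)/2<0$ by Theorem \ref{thm:negativeindex}.

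Boundary zeros are the main obstacle. If the boundary of the disk contains a zero of $\sigma$, the model field near it must be examined on the union of the two half-neighbourhoods. I would handle this by the same angular-variation computation \eqref{angularvariation}. The double is obtained by reflecting $(\theta,t)\mapsto(\theta,-t)$ and swapping $L_1\leftrightarrow L_2$, which corresponds to reflecting asymptotic directions across principal directions. Under this reflection, the total rotation along a small loop around the point is the rotation along a half circle in the first copy plus the reflected rotation in the second copy. Each half contributes $-\frac{n-2}{2}\pi$ by the computation in the proof of Theorem \ref{thm:negativeindex}, so the index is again $-(n-2)/2\le 0$. The boundary condition of Theorem \ref{thm: mainA} extends by continuity to show that the local model is compatible with the gluing. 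If some degenerate case $n=2$ yields index $0$, it is harmless.

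Finally, we reach the contradiction. By Poincaré--Hopf for line fields (Hopf \cite{Hopf1946}), the sum of the indices of $L$ on the sphere $S$ equals $\chi(S)=2>0$. But every index is $\le 0$, so the sum is $\le 0$. This contradiction shows that $\Sigma$ is contained in some $S\in\mathcal{S}$. This $S$ intersects $\partial\Omega$, because $\Sigma\subset S$ meets $\partial\Omega$ along $\partial\widehat{\Sigma}$, which is nonempty since $\widehat{\Sigma}$ is a disk.
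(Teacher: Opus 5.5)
Your proposal is correct and follows the paper's own argument: contradiction, isolated zeros of $\sigma$ via Bers' theorem, the glued line field $L$ on the double from Theorem \ref{thm: mainA} and Proposition \ref{prop:hipoteseshopf}, negative indices from Theorem \ref{thm:negativeindex}, and Poincar\'e--Hopf on the sphere. Your explicit treatment of boundary singularities (the half-loop rotations of $L_1$ and of the reflected $L_2$ summing to $-(n-2)\pi$) and of the unique-continuation step only spells out what the paper leaves implicit; one small fix is that closedness of the coincidence set should come from Bers' dichotomy at the limit point (otherwise $\sigma\neq 0$ on a punctured neighbourhood), not from continuity of $\mathcal{S}$ alone.
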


\begin{proof}
    Assume, by contradiction, that $\Sigma$ is not contained in any surface of the transitive family. Then the symmetric two-tensor $\sigma$ defined in \eqref{int.2} induces asymptotic lines on $\Sigma$ with isolated singularities, possibly on $\partial (\Omega\cap \Sigma)$. 
    Let $L$ be the line field on the double surface $S=\widehat{\Sigma}_1\cup_{id}\widehat{\Sigma}_2$ constructed in Proposition \ref{prop:hipoteseshopf}. By Theorem \ref{thm:negativeindex}, all singularities of $L$ are isolated and have negative index. 
    By the Poincaré-Hopf Index Theorem (see the proof in \cite{Hopf1946}, Chapter III, Theorem 1.6), this implies that the Euler characteristic of $S$ is negative. However, by hypothesis, $\widehat{\Sigma}$ is a disk, and hence $S$ is a sphere, which has positive Euler characteristic. This contradiction proves the theorem.
\end{proof}

\begin{rmk}
    In the same setting, if $\Sigma\in \mathcal{A}$ intersects $\Omega$ as an immersed annulus that satisfies the capillary boundary condition with respect to $\Omega$, then either $\Sigma$ is contained in a member of the transitive family or $\sigma$ has no singularities on $\Sigma\cap \Omega$. Indeed, in this case, the double surface is a torus, which has zero Euler characteristic. Applied to the case of the Euclidean unit ball, with the transitive family of totally geodesic planes, we recover the well-known fact that capillary minimal annuli in the Euclidean ball do not have umbilical points.
\end{rmk}

\section{Applications}

The first application of Theorem \ref{main.theorem} is a uniqueness result for elliptic Weingarten surfaces, analogous to Corollary 4.1 in \cite{Galvez-Mira-Uniqueness}. We consider $M=\mathbb{R}^3$ and $E=\{(x,y,z)\in\mathbb{R}^3;\ z\geq0\}$ and denote by $\kappa_1\geq\kappa_2$ the principal curvatures of a surface in $\mathbb{R}^3$.

\begin{corollary}
    Let $S$ be an immersed sphere in $\mathbb{R}^3$ satisfying the prescribed curvature equation
    \begin{equation}\label{corollary1eq1}
    \Phi(\kappa_1,\kappa_2,\eta)=0
    \end{equation}
    
    where, for each fixed $\eta_0\in \mathbb{S}^2$, the function $\Phi(x,y,\eta_0)$ is $C^{1,\alpha}$, symmetric in $x$ and $y$, and satisfies the ellipticity condition
    
    \begin{equation}\label{corollary1eq2}
    \frac{\partial\Phi}{\partial x}\frac{\partial\Phi}{\partial y}>0.
    \end{equation}
    
    Assume that $S$ is rotationally symmetric about the $z$-axis and that its Gauss map is a diffeomorphism. If $\Sigma$ is a connected oriented surface satisfying \eqref{corollary1eq1} and intersecting the half-space $E$ in a capillary disk, then $\Sigma$ is contained in a translation of $S$ that intersects $\partial E$.
\end{corollary}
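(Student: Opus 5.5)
The plan is to apply Theorem \ref{main.theorem} with $M=\mathbb{R}^3$, $\Omega=E$, $\mathcal{A}$ the class of oriented surfaces satisfying \eqref{corollary1eq1}, and $\mathcal{S}$ the family of all translates of $S$. Two facts must be checked: that $\mathcal{A}$ is modeled by an elliptic PDE field adapted to $E$, and that $\mathcal{S}$ is a transitive capillary family with respect to $E$.

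\textbf{Ellipticity.} Around any $(q,\Pi)\in G^2(\mathbb{R}^3)$ we take adapted coordinates given by a Euclidean rigid motion sending $q$ to the origin and $\Pi$ to the horizontal plane with its orientation. For $q\in\partial E$ with $\Pi$ transverse to $\partial E$, we compose with a rotation so that $E$ corresponds to $\{u\ge 0\}$ and $\partial E$ to $\{u=0\}$. For interior points $q$ we use small neighborhoods inside $E\setminus\partial E$. In these coordinates a graph $z=h(u,v)$ has principal curvatures $\kappa_1\ge\kappa_2$ and normal $\eta$, which are expressed through $(h_u,h_v,h_{uu},h_{uv},h_{vv})$. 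Because $\Phi$ is symmetric, $\Phi(\kappa_1,\kappa_2,\eta)$ is a well-defined function of the shape operator. The condition $\Phi_x\Phi_y>0$ then gives ellipticity of the resulting equation in the sense of \eqref{int.Phi=0}, as in the argument of Gálvez--Mira (Corollary 4.1 of \cite{Galvez-Mira-Uniqueness}). If needed, we replace $\Phi$ by $-\Phi$ so that $\Phi_r>0$.

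\textbf{Transitivity.} Condition (1): the Legendrian lift of each translate of $S$ is an embedding, because the Gauss map of $S$ is a diffeomorphism onto $\mathbb{S}^2$, so $p\mapsto (p,T_pS)$ is injective even as oriented planes. Condition (2): given $(p,\Pi)$, there is a unique $x_0\in S$ whose oriented normal equals that of $\Pi$, again because the Gauss map is a bijection. The translate $S+(p-x_0)$ is then the unique member of $\mathcal{S}$ through $p$ with tangent plane $\Pi$, since any other translate with that tangent plane at $p$ would need a second point with the same normal. Condition (4): the translation vector $p-x_0$ is $C^3$ in $(p,\Pi)$ by the inverse function theorem applied to the Gauss map, which requires the Gauss map to be a $C^3$ diffeomorphism.

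\textbf{Capillarity (the main obstacle).} Condition (3) is where rotational symmetry is used. Rotational symmetry of $S$ about the $z$-axis is preserved by translation, so each translate is symmetric about a vertical axis. Its intersection with a horizontal plane $\{z=c\}$ is therefore a union of horizontal circles centered on that axis. Along each such circle the angle between the surface normal and $e_3$ is constant, by symmetry. Hence on any connected component of $S\cap\partial E$ where the intersection is transverse, the contact angle is constant. The care needed here is that a transverse component is a full parallel circle, which holds because transversality makes the component a smooth closed curve invariant under the rotations. Finally, every translate of $S$ satisfies \eqref{corollary1eq1}, since the equation is invariant under translations: both $\kappa_i$ and $\eta$ are preserved. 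So $\mathcal{S}\subset\mathcal{A}$, and Theorem \ref{main.theorem} yields the conclusion.
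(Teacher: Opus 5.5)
Your proposal is correct and follows the same route as the paper: apply Theorem~\ref{main.theorem} to the family of translates of $S$, using bijectivity of the Gauss map for conditions (1), (2) and (4), rotational symmetry for the constant contact angle, and symmetry of $\Phi$ together with $\Phi_x\Phi_y>0$ for ellipticity (the paper cites Alexandrov for this last point), and you give more detail than the paper does. One small correction: at $q\in\partial E$ with $\Pi$ not orthogonal to $\partial E$, no rotation keeping $\Pi$ horizontal can make $\partial E=\{u=0\}$, so use the linear shear $(u,v,z)\mapsto(u+cz,v,z)$ instead, which fixes $\partial_u,\partial_v$ and is allowed because adapted coordinates need not be orthonormal.
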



\begin{proof}
    Since $S$ is rotationally symmetric about the $z$-axis, $\partial E=\{z=0\}$, and the Gauss map of $S$ is a diffeomorphism, the family of all translations of $S$ constitutes a transitive capillary family in $\mathbb{R}^3$ with respect to $E$. All such surfaces satisfy \eqref{corollary1eq1}. 
    
    The symmetry of $\Phi$ in $x$ and $y$, together with \eqref{corollary1eq2}, implies that $\Phi$ satisfies an ellipticity condition as presented in the introduction of this article (see \cite{Alexandrov1962a}).
    
    Moreover, if a translation $S'$ of $S$ intersects $\partial E$ transversally, then $S'\cap E$ is a capillary disk in $E$. Therefore, all hypotheses of Theorem \ref{main.theorem} are verified.
\end{proof}

The main theorem can also be used to derive non-existence results. Let $B_r(p)$ denote the open geodesic ball in the Euclidean sphere $\mathbb{S}^3$ with radius $r\in (0,\pi/2)$ and center at $p$.

\begin{theorem}
    There are no minimal capillary disks in the domain $\Omega=\mathbb{S}^3\setminus (B_r(p)\cup B_r(-p))$.
\end{theorem}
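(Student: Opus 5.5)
We would apply Theorem \ref{main.theorem} with $M=\mathbb{S}^3$ (round metric), $\mathcal{A}$ the class of minimal surfaces, and $\mathcal{S}$ the family of totally geodesic great $2$-spheres. The minimal surface equation, written in any adapted coordinates, is a quasilinear elliptic PDE, so $\mathcal{A}$ is modeled by an elliptic PDE field adapted to $\Omega$. The great spheres are embedded, so their Legendrian lifts are embeddings. Through every $(q,\Pi)\in G^2(\mathbb{S}^3)$ passes exactly one great sphere, namely $\exp_q(\Pi)$, and this family depends smoothly on $(q,\Pi)$. Hence conditions (1), (2) and (4) of Definition \ref{def.Transitive Capillary Family} hold.

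The key step is condition (3). The boundary $\partial\Omega=\partial B_r(p)\cup\partial B_r(-p)$ consists of two geodesic spheres, which are totally umbilical round spheres. In the model $\mathbb{S}^3\subset\mathbb{R}^4$, a great sphere $S$ is $\mathbb{S}^3\cap P$ for a hyperplane $P$ through the origin. Likewise $\partial B_r(p)=\mathbb{S}^3\cap\{x\cdot p=\cos r\}$ is a small sphere. Two such spheres meet at constant angle along their circle of intersection, by symmetry: the rotations of $\mathbb{R}^4$ fixing both $p$ and the unit normal $n$ to $P$ act transitively on $S\cap\partial B_r(p)$ and preserve both surfaces. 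Equivalently, the contact angle is $\cos\theta = (n\cdot p)/\sin r$ up to sign, which is constant. So $\mathcal{S}$ is a transitive capillary family with respect to $\Omega$.

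Now suppose $\Sigma$ is a minimal capillary disk in $\Omega$. As in the Nitsche example after Theorem A, we extend it to an open minimal surface in $\mathbb{S}^3$ beyond $\partial\Omega$, using analyticity of the boundary and \cite{KinNirSpr}. Theorem \ref{main.theorem} then gives that $\Sigma$ lies in a great sphere $S$ meeting $\partial\Omega$, with $\Sigma$ a connected component of $S\cap\Omega$.

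It remains to rule out disks, which is where the specific radius condition enters. Since $S$ is a great sphere, $-S=S$, and the antipodal map swaps $B_r(p)$ and $B_r(-p)$. So $S$ meets $\partial B_r(p)$ if and only if it meets $\partial B_r(-p)$. Transversality is automatic because $\Sigma$ is capillary, so in the transverse case $S$ meets both balls. Because $r<\pi/2$, the sets $S\cap B_r(\pm p)$ are two disjoint spherical caps on $S$, disjoint since the balls are. The complement of two disjoint open caps in a $2$-sphere is an annulus, and it is connected, so the component $\Sigma$ equals $S\cap\overline\Omega$, an annulus rather than a disk. This contradiction proves the theorem.

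The main obstacle is the analytic extension step together with ensuring that the tangential or degenerate cases are excluded. I would handle the latter by noting that capillarity with $\alpha\in(0,\pi)$ forces transversality.
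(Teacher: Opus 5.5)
Your proposal is correct and follows the same route as the paper: the great spheres form the transitive capillary family for the minimal surface equation, the disk is extended analytically past $\partial\Omega$ via Kinderlehrer--Nirenberg--Spruck, the main theorem places it in a great sphere, and the intersection of such a sphere with $\Omega$ is an annulus, never a disk. You also supply details the paper leaves implicit: the symmetry verification of condition (3), with $\cos\theta=(n\cdot p)/\sin r$, and the antipodal argument using $r<\pi/2$ to show that $S\cap\Omega$ is an annulus. Strictly, one further sentence is needed to pass from ``image'' to ``disk'': an immersed compact disk in that annulus with boundary on $\partial\Omega$ would be a finite cover of it, which the Euler characteristic rules out.
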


\begin{proof}
    The intersections of equators of $\mathbb{S}^3$ with $\Omega$ form a transitive capillary family with respect to $\Omega$. The intersection of each member of this transitive family with $\Omega$ is either a sphere or annuli; in particular, it contains no disks.
    
    Let $\Sigma$ be an immersed capillary minimal surface in $\Omega$. Then $\Sigma$ extends to a minimal surface in $\mathbb{S}^3$ (we may again apply Theorem 5.2 of \cite{KinNirSpr} to conclude that the surface is analytic up to the boundary, and hence admits a local extension beyond $\Omega$ as a minimal surface). 
    
    If, in addition, $\Sigma$ is a disk, we may apply Theorem \ref{main.theorem} to its extension and conclude that $\Sigma$ is given by the transversal intersection of $\Omega$ with a member of the above capillary transitive family relative to $\Omega$. This is a contradiction, since such intersections are never disks.
\end{proof}




\printbibliography[heading=bibintoc]
\end{document}